\newtheorem{theorem}{Theorem}[section]
\newtheorem{corollary}[theorem]{Corollary}
\theoremstyle{definition}
\theoremstyle{remark}
\newtheorem{remark}[theorem]{Remark}
\begin{document}
\title{Scalar Curvature and Harmonic Maps to $S^1$}
\author[D. Stern]{Daniel L. Stern}
\address{Department of Mathematics,
University of Toronto, Canada}
\email{dl.stern@utoronto.ca}

\begin{abstract} 
For a harmonic map $u:M^3\to S^1$ on a closed, oriented $3$--manifold, we establish the identity
$$2\pi \int_{\theta\in S^1}\chi(\Sigma_{\theta})\geq \frac{1}{2}\int_{\theta\in S^1}\int_{\Sigma_{\theta}}(|du|^{-2}|Hess(u)|^2+R_M)$$
relating the scalar curvature $R_M$ of $M$ to the average Euler characteristic of the level sets $\Sigma_{\theta}=u^{-1}\{\theta\}$. As our primary application, we extend the Kronheimer--Mrowka characterization of the Thurston norm on $H_2(M;\mathbb{Z})$ in terms of $\|R_M^-\|_{L^2}$ and the harmonic norm to any closed $3$--manifold containing no nonseparating spheres. Additional corollaries include the Bray--Brendle--Neves rigidity theorem for the systolic inequality $(\min R_M)sys_2(M)\leq 8\pi$, and the well--known result of Schoen and Yau that $T^3$ admits no metric of positive scalar curvature.
\end{abstract}

\maketitle

\section{Introduction}

Let $(M^n,g)$ be a closed, oriented Riemannian manifold. Recall that a circle--valued map $u:M\to S^1=\mathbb{R}/\mathbb{Z}$ is \emph{harmonic} if and only if the gradient one--form $h_u:=u^*(d\theta)$ is harmonic in the standard Hodge sense
\begin{equation}\label{hodge.harm}
dh_u=0,\text{ }d^*h_u=0.
\end{equation}
It is easy to see that \eqref{hodge.harm} holds precisely when $u$ minimizes the Dirichlet energy $E(u)=\int_M|du|^2$ in its homotopy class $[u]\in[M:S^1]$, and it follows from elementary Hodge--theoretic considerations that every class in $[M:S^1]$ contains such a minimizer, unique up to a constant rotation.

By Poincar\'{e} duality, every $(n-1)$-homology class $\alpha\in H_{n-1}(M;\mathbb{Z})$ corresponds to a homotopy class $[u]\in [M:S^1]$ of $S^1$--valued maps whose level sets $\Sigma_{\theta}=u^{-1}\{\theta\}$ represent $\alpha$. In particular, just as one can minimize area among integral currents in $\alpha$ to obtain geometrically distinguished representatives (minimal hypersurfaces), one may also minimize energy in the dual homotopy class in $[M:S^1]$ to obtain a geometrically distinguished \emph{(singular) fibration} of $M$ whose fibers $\Sigma_{\theta}$ represent $\alpha$. In view of the key role played by area--minimizing hypersurfaces in the study of scalar curvature since the pioneering work of Schoen and Yau \cite{SY.psc.1, SY.psc.2, SY.pmt}, it is natural to ask whether a careful study of $S^1$--valued harmonic maps may provide a similarly useful link between scalar curvature and topology. In this note, we provide some evidence for a positive answer, in the three-dimensional setting.

The main result of this paper is the following identity for $S^1$--valued harmonic maps on $3$--manifolds, relating the scalar curvature $R_M$ of $M$ to the average Euler characteristic $\chi(\Sigma_{\theta})$ of the level sets $\Sigma_{\theta}=u^{-1}\{\theta\}$. In general, the level set $\Sigma_{\theta}$ may of course have multiple components, and $\chi(\Sigma_{\theta})$ denotes the sum of their Euler characteristics.

\begin{theorem}\label{key.id} Let $(M^3,g)$ be a closed, oriented $3$-manifold, and let $u: M\to S^1$ be a nontrivial harmonic map. Then the level sets $\Sigma_{\theta}=u^{-1}\{\theta\}$ of $u$ satisfy 
\begin{equation}\label{main.id}
2\pi\int_{\theta\in S^1}\chi(\Sigma_{\theta})\geq \frac{1}{2}\int_{\theta\in S^1}\int_{\Sigma_{\theta}}(|du|^{-2}|Hess(u)|^2+R_M).
\end{equation}
\end{theorem}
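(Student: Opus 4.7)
The plan is to combine the Bochner identity for the harmonic $1$-form $du$ with the Gauss equation on smooth level sets of $u$, then integrate using Gauss--Bonnet and the coarea formula. Throughout, write $\phi := |du|$ and $M^* := \{\phi > 0\}$; since $u$ is nontrivial, $M \setminus M^*$ has measure zero, and Sard's theorem guarantees that $\Sigma_\theta \subset M^*$ is a smooth embedded surface for a.e. $\theta \in S^1$.

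The first step is to derive a pointwise identity on $M^*$. With $\nu := \nabla u / \phi$ denoting the unit normal to the level sets, harmonicity $\Delta u = 0$ and the decomposition of $Hess(u)$ in an adapted frame give $Hess(u)(\nu,\nu) = \phi H$ and the orthogonal decomposition $|Hess(u)|^2 = \phi^2 |A|^2 + 2 |\nabla^{\Sigma} \phi|^2 + \phi^2 H^2$, where $H$ and $A$ are the mean curvature and second fundamental form of the level set. Substituting into the Bochner identity $\tfrac{1}{2} \Delta |du|^2 = |Hess(u)|^2 + Ric(\nabla u, \nabla u)$ and simplifying yields $\Delta \log \phi = |A|^2 - H^2 + Ric(\nu,\nu)$ on $M^*$; combining with the Gauss equation $2 K_\Sigma = R_M - 2 Ric(\nu,\nu) + H^2 - |A|^2$ produces the key pointwise identity
$$\Delta_M \log \phi \;=\; R_M - Ric(\nu,\nu) - 2 K_\Sigma \quad \text{on } M^*,$$
where $K_\Sigma$ is the Gauss curvature of the level set through each point.

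Next I integrate. Gauss--Bonnet gives $\int_{\Sigma_\theta} 2 K_\Sigma = 4\pi \chi(\Sigma_\theta)$ for a.e. $\theta$, and integrating the pointwise identity over $\Sigma_\theta$ and then over $\theta \in S^1$ via the coarea formula (with $\int_{S^1} \int_{\Sigma_\theta} f \, d\mathcal{H}^2 \, d\theta = \int_M f \phi \, dV$) yields
$$4\pi \int_{S^1} \chi(\Sigma_\theta) \, d\theta \;=\; \int_M R_M \, \phi - \int_{M^*} \frac{Ric(\nabla u, \nabla u)}{\phi} - \int_{M^*} \phi \, \Delta \log \phi.$$
Using the pointwise relation $\phi \Delta \log \phi = \Delta \phi - \phi^{-1} |\nabla \phi|^2$ on $M^*$ together with the Bochner rearrangement $\phi \Delta \phi = |Hess(u)|^2 + Ric(\nabla u, \nabla u) - |\nabla \phi|^2$, the right-hand side telescopes to
$$4\pi \int_{S^1} \chi(\Sigma_\theta) \, d\theta \;=\; \int_M R_M \, \phi + \int_{M^*} \frac{|Hess(u)|^2}{\phi} - 2 \int_{M^*} \Delta \phi.$$
Reapplying coarea identifies the first two terms with $\int_{S^1} \int_{\Sigma_\theta} (|du|^{-2} |Hess(u)|^2 + R_M)$, so the desired inequality reduces to showing $\int_{M^*} \Delta \phi \leq 0$.

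The final step, which I expect to be the main technical obstacle, establishes this last estimate by smooth approximation. Setting $\phi_\epsilon := \sqrt{\phi^2 + \epsilon^2}$ gives a smooth positive function on the closed manifold $M$, so $\int_M \Delta \phi_\epsilon = 0$ exactly. The Bochner identity applied to $\phi_\epsilon^2 = |du|^2 + \epsilon^2$ yields $\Delta \phi_\epsilon = \phi_\epsilon^{-1} (|Hess(u)|^2 + Ric(\nabla u, \nabla u) - |\nabla \phi_\epsilon|^2)$ globally, while the elementary calculation $|\nabla \phi_\epsilon|^2 = \phi^2 |\nabla \phi|^2 / \phi_\epsilon^2 \leq |\nabla \phi|^2$ combined with Bochner on $M^*$ gives the pointwise bound $\Delta \phi_\epsilon \geq (\phi / \phi_\epsilon) \, \Delta \phi$ on $M^*$. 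Integrating yields $0 \geq \int_{M^*} (\phi / \phi_\epsilon) \, \Delta \phi$, and passing to the limit $\epsilon \to 0^+$ will give the needed bound $\int_{M^*} \Delta \phi \leq 0$. The delicate part will be justifying this final limit rigorously; this requires integrability estimates near $\{du = 0\}$ for quantities like $|\nabla \phi|^2 / \phi$ and $|Hess(u)|^2/\phi$, which should follow from classical regularity of harmonic functions (local approximation by homogeneous harmonic polynomials near critical points) together with Kato-type inequalities controlling $|\nabla \phi|^2$ by $|Hess(u)|^2$.
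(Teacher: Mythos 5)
Your pointwise computations are correct, and the ingredients are the same as the paper's (the Bochner identity for $h=du$, the Schoen--Yau rearrangement via the traced Gauss equation, coarea, Gauss--Bonnet, and the regularization $\varphi_\delta=(|du|^2+\delta)^{1/2}$); what differs is the bookkeeping at the critical set. You convert everything into an exact identity on $M^*=\{|du|>0\}$, so the whole theorem is made to hinge on the single claim $\int_{M^*}\Delta|du|\le 0$. The paper never forms such an identity: it integrates the pointwise inequality $\Delta\varphi_\delta\ge\frac{1}{2\varphi_\delta}[|Hess(u)|^2+|du|^2(R_M-R_\Sigma)]$ only over $u^{-1}(B)$ for a compact set $B$ of regular values, controls the excised piece via the global bound $\Delta\varphi_\delta\ge -C_M|du|$ (plain Kato plus a Ricci lower bound) together with $C\int_A \mathrm{Area}(\Sigma_\theta)$, and then sends $\delta\to0$ and $|A|\to0$; every integral in sight is finite, so all limit passages are soft.

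The gap in your write-up is where you flag it, but it is slightly larger than you indicate, and it closes more easily than you expect. First, already in your second step, splitting $\int_{M^*}\phi\,\Delta\log\phi$ into $\int\Delta\phi-\int|\nabla\phi|^2/\phi$ and invoking coarea/Gauss--Bonnet to produce $\int_{S^1}\chi(\Sigma_\theta)\,d\theta$ presupposes that $|Hess(u)|^2/\phi$, $|\nabla\phi|^2/\phi$, and $|K_\Sigma|\,\phi$ are integrable on $M^*$; at that stage nothing rules out a formal $\infty-\infty$ rearrangement. Second, the $\epsilon\to0$ limit does need an argument, but not the local structure of harmonic functions near critical points: since $\phi/\phi_\epsilon\uparrow 1$, and plain Kato $|\nabla\phi|\le|Hess(u)|$ together with $Ric\ge -Cg$ gives $\Delta\phi\ge -C\phi$ on $M^*$ (so $(\Delta\phi)^-$ is bounded), your inequality $\int_{M^*}(\phi/\phi_\epsilon)\Delta\phi\le0$ plus monotone convergence on the positive part forces $(\Delta\phi)^+\in L^1(M^*)$ and $\int_{M^*}\Delta\phi\le 0$. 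The refined Kato inequality $|\nabla\phi|^2\le\frac{2}{3}|Hess(u)|^2$ for harmonic one--forms in dimension three then yields $|Hess(u)|^2/\phi\le 3(\Delta\phi)^++3C\phi\in L^1(M^*)$, which supplies exactly the integrability missing in your second step (and, via the Gauss equation, $|K_\Sigma|\,\phi\in L^1$, hence integrability of $\theta\mapsto\chi(\Sigma_\theta)$). So your proof is complete provided you run the $\epsilon$-argument and these Kato bounds first and derive the identity afterwards; alternatively, adopting the paper's excision of a small neighborhood of the critical values avoids the integrability bookkeeping altogether.
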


The proof essentially consists of applying the Schoen--Yau rearrangement trick to the Ricci term in the Bochner identity
$$\Delta |h|=|h|^{-1}(|Dh|^2-|d|h||^2+Ric(h,h))$$
for $h=u^*(d\theta)$, integrating, and applying the coarea formula. Of course, $u$ will have (a measure-zero set of) critical values in general, so in practice we approximate $|h|$ by $(|h|^2+\delta)^{1/2}$ and let $\delta \to 0$; some care must be taken in passing to the limit, but the analysis required is minimal. We describe the relevant computations in Section \ref{compute}. 

Our main application of Theorem \ref{key.id} is the extension of a theorem of Kronheimer and Mrowka (\cite{KM}, Theorem 2) characterizing the Thurston norm on $H_2(M;\mathbb{Z})$ in terms of the harmonic norm and the $L^2$ norm $\|R_M^-\|_{L^2}$ of the negative part $R_M^-:=\min\{0,R_M\}$ of the scalar curvature. In what follows we will always take our manifolds to be connected, without further comment.

For a closed, oriented $3$--manifold, we recall that the \emph{Thurston norm} (or semi--norm) of a homology class $\alpha\in H_2(M;\mathbb{Z})$ is defined by the minimum
\begin{equation}\label{th.def}
\|\alpha\|_{Th}:=\min\{\chi_-(\Sigma)\mid [\Sigma]=\alpha\in H_2(M;\mathbb{Z})\},
\end{equation}
over all embedded surfaces $\Sigma$ representing $\alpha$, of the quantity $\chi_-(\Sigma)$ given by the sum
\begin{equation}
\chi_-(\Sigma)=\max\{0,-\chi(\Sigma_1)\}+\cdots+\max\{0,-\chi(\Sigma_k)\}
\end{equation}
over the connected components $\Sigma_1,\ldots,\Sigma_k$ of $\Sigma$. Thurston introduced this semi--norm in \cite{Th}, in connection with the study of foliations and fibrations of $3$--manifolds over $S^1$. 

When $M^3$ is endowed with a metric $g$, another natural norm on $H_2(M;\mathbb{Z})$ is the \emph{harmonic norm} 
\begin{equation}
\|\alpha\|_{H,g}:=\|h_{\alpha}\|_{L^2},
\end{equation}
given by the $L^2$ norm of the harmonic one--form $h_{\alpha}\in\mathcal{H}^1(M)$ with integral periods dual to $\alpha$. Equivalently, we see that
\begin{equation}\label{harm.char.2}
\|\alpha\|_H=\|du\|_{L^2}
\end{equation}
is the $L^2$ norm of the gradient for the harmonic map $u:M\to S^1=\mathbb{R}/\mathbb{Z}$ whose level sets represent $\alpha$. It is natural to ask how the harmonic norm relates to the Thurston norm on $H_2(M;\mathbb{Z})$. Using the identity \eqref{main.id}, we establish the following relationship.

\begin{theorem}\label{thurst.thm} Let $(M^3,g)$ be a closed, oriented $3$--manifold containing no nonseparating spheres. Then for any class $\alpha\in H_2(M;\mathbb{Z})$, we have
\begin{equation}\label{thurst.bd}
\|\alpha\|_{Th}\leq \frac{1}{4\pi}\|\alpha\|_{H,g}\|R_g^-\|_{L^2},
\end{equation}
where $R_g^-:=\min\{0,R_g\}$ is the negative part of the scalar curvature. If equality holds for some nontrivial class $\alpha\neq 0$, then $(M,g)$ is covered isometrically by a cylinder $\Sigma \times \mathbb{R}$ over a surface $\Sigma^2$ of constant curvature. 
\end{theorem}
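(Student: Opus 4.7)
The plan is to apply Theorem \ref{key.id} to the energy-minimizing harmonic map $u: M \to S^1$ whose level sets represent $\alpha$ (so $\|du\|_{L^2}=\|\alpha\|_{H,g}$), and to convert the resulting curvature--Euler-characteristic inequality into the claimed Thurston estimate; the rigidity statement will then follow by tracking when equality holds throughout.

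For the analytic step, I would start from (\ref{main.id}), discard the nonnegative Hessian term, and use $R_g \geq -|R_g^-|$ to obtain
$$-2\pi\int_{\theta\in S^1}\chi(\Sigma_\theta)\,d\theta \;\leq\; \tfrac{1}{2}\int_{\theta\in S^1}\int_{\Sigma_\theta}|R_g^-|.$$
The coarea formula rewrites the right-hand side as $\tfrac{1}{2}\int_M |R_g^-|\,|du|\,dV$, and Cauchy--Schwarz bounds this by $\tfrac{1}{2}\|R_g^-\|_{L^2}\|du\|_{L^2} = \tfrac{1}{2}\|R_g^-\|_{L^2}\|\alpha\|_{H,g}$, producing the analytic estimate
$$-\!\int_{\theta\in S^1}\chi(\Sigma_\theta)\,d\theta \;\leq\; \tfrac{1}{4\pi}\|R_g^-\|_{L^2}\|\alpha\|_{H,g}.$$

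The topological step converts this into a bound on $\|\alpha\|_{Th}$, and is where I expect the main difficulty. For a.e.\ regular $\theta$ the smooth surface $\Sigma_\theta$ represents $\alpha$, and the hypothesis that $M$ has no nonseparating $2$-spheres ensures that each sphere component of $\Sigma_\theta$ is null-homologous; deleting those components yields a representative of $\alpha$ with the same value of $\chi_-$, so $\chi_-(\Sigma_\theta)\geq \|\alpha\|_{Th}$ for a.e.\ $\theta$. The obstacle is that $\chi_-(\Sigma_\theta) = -\chi(\Sigma_\theta) + 2\cdot\#\{\text{sphere components of }\Sigma_\theta\}$, so the analytic estimate only controls the first term and not the sphere count. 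I would close this gap by proving a componentwise refinement of (\ref{main.id}): since the Bochner identity in Section \ref{compute} is pointwise and Gauss--Bonnet is additive over components, a localization of the $\delta\to 0$ approximation used to prove Theorem \ref{key.id} should yield the inequality $2\pi\chi(\Sigma_{\theta,i}) \geq \tfrac{1}{2}\int_{\Sigma_{\theta,i}}(|du|^{-2}|Hess(u)|^2 + R_M)$ on each connected component $\Sigma_{\theta,i}$, for a.e.\ $\theta$. Summing this inequality only over the \emph{non-sphere} components $\Sigma_\theta^{\mathrm{ns}}$ of $\Sigma_\theta$ (where the sum of the left-hand sides is $-2\pi\chi_-(\Sigma_\theta)$) gives
$$2\pi\chi_-(\Sigma_\theta) \;\leq\; -\tfrac{1}{2}\int_{\Sigma_\theta^{\mathrm{ns}}}(|du|^{-2}|Hess(u)|^2 + R_M) \;\leq\; \tfrac{1}{2}\int_{\Sigma_\theta^{\mathrm{ns}}}|R_g^-|,$$
and integrating in $\theta$ followed by coarea and Cauchy--Schwarz exactly as in the analytic step produces $\|\alpha\|_{Th}\leq \tfrac{1}{4\pi}\|R_g^-\|_{L^2}\|\alpha\|_{H,g}$.

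For the rigidity statement, equality in (\ref{thurst.bd}) forces equality in each of the steps above. Vanishing of the $|Hess(u)|^2$-integral gives $Hess(u)\equiv 0$, so $du$ is a parallel $1$-form of constant norm on $M$; saturation of Cauchy--Schwarz gives $|R_g^-|=c|du|$ a.e.\ for a constant $c\geq 0$; and equality in the underlying Bochner--Gauss computation forces the level sets to be totally geodesic with constant Gauss curvature. A parallel $1$-form of constant norm yields an isometric splitting of the universal cover as $\Sigma\times\mathbb{R}$, with $du$ corresponding to the $dt$-factor, and the constant-curvature condition on the fibers $\Sigma$ supplies the stated rigidity.
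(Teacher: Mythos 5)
Your analytic step (drop the Hessian term in \eqref{main.id}, replace $R_M$ by $R_g^-$, coarea, Cauchy--Schwarz) and your rigidity sketch are essentially the paper's proof. The gap is in your topological step: to pass from controlling $-\chi(\Sigma_\theta)$ to controlling $\chi_-(\Sigma_\theta)$ you invoke a per-fiber, per-component refinement of \eqref{main.id}, namely $2\pi\chi(\Sigma_{\theta,i})\geq \tfrac12\int_{\Sigma_{\theta,i}}(|du|^{-2}|Hess(u)|^2+R_M)$ for a.e.\ $\theta$, and you claim it follows by ``localizing'' the $\delta\to 0$ argument. It does not: the proof of Theorem \ref{key.id} hinges on integrating $\Delta\varphi_\delta$ over (essentially all of) $M$ so that $\int_M\Delta\varphi_\delta=0$; restricting the integration to $u^{-1}(I)$ for a subinterval $I$, let alone to the flow-out of a single component, produces boundary terms of the form $\int \partial_\nu\varphi_\delta$ with no sign. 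This is precisely why the localized statement in the remark after the proof of Theorem \ref{key.id} carries the extra term $-2\int_M\psi\langle d\psi,d|du|\rangle$ rather than reducing to a fiberwise inequality. So the componentwise a.e.-$\theta$ inequality you need is unsupported by the method you propose, and there is no reason to expect it to hold in general; as written, your proof does not close.

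Fortunately the refinement is unnecessary, and this is where the paper's route differs: on a regular fiber, every connected component $S_i$ satisfies $\int_{S_i}*h=\int_{S_i}|h|>0$ with $*h$ closed, so $[S_i]\neq 0$ in $H_2(M;\mathbb{R})$, i.e.\ \emph{every} component of a regular fiber is nonseparating. Since $M$ contains no nonseparating spheres, regular fibers have no sphere components at all --- the null-homologous sphere components you worry about cannot occur --- hence $\chi_-(\Sigma_\theta)=-\chi(\Sigma_\theta)\geq\|\alpha\|_{Th}$ for every regular value $\theta$, and your analytic estimate then yields \eqref{thurst.bd} directly. With this substitution your argument coincides with the paper's, including the rigidity part: equality forces $Hess(u)\equiv 0$ (so $|du|$ is a nonzero constant and the level sets are totally geodesic), and equality in both $-R_M\leq|R_M^-|$ on $\{|du|\neq0\}=M$ and in Cauchy--Schwarz forces $R_M\equiv R_M^-$ to be constant; the gradient flow of $u$ then gives the local isometric splitting, and the Gauss equation gives the fibers constant curvature equal to $R_M$.
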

\begin{remark} Since $T^3$ contains no nonseparating spheres, the rigidity statement in Theorem \ref{thurst.thm} recovers in this case the well--known fact that any metric on $T^3$ of nonnegative scalar curvature must be flat. This of course follows from the classical results of Schoen and Yau \cite{SY.psc.1} (as well as Dirac operator methods \cite{GL1}), but perhaps the new proof will be of some interest.
\end{remark}

The estimate \eqref{thurst.bd}--in its dual form, bounding the harmonic norm on $H^2(M)$ above by the product of $\|R^-\|_{L^2}$ and the dual Thurston norm--was proved for irreducible $3$-manifolds by Kronheimer and Mrowka in \cite{KM}. The proof in \cite{KM} is very different from ours, first using the Seiberg--Witten equations (and the Weitzenb\"{o}ck formula for the Dirac operator) to bound the harmonic norm of a monopole class in $H^2(M)$ by $\|R_M^-\|_{L^2}$, then building on deep results of Gabai, Eliashberg--Thurston and others to show that the unit ball of the dual Thurston norm lies in the convex hull of the monopole classes when $M$ is irreducible. The rigidity statement was later supplied by Itoh and Yamase in \cite{IY}. We refer the reader to F. Lin's recent paper \cite{Lin} for some interesting refinements and developments on these themes.

As in \cite{KM}, it is not hard to construct a family of metrics on $M$ for which the inequality \eqref{thurst.bd} approaches equality, giving the following geometric characterization of the Thurston norm. We give a detailed discussion of Theorem \ref{thurst.thm} and the following corollary in Section \ref{thurst.sec} below. 

\begin{corollary}\label{thurst.char} For a class $\alpha\in H_2(M;\mathbb{Z})$ on a closed, oriented $3$--manifold with no nonseparating spheres, the Thurston norm is given by the infimum
\begin{equation}
\|\alpha\|_{Th}=\frac{1}{4\pi}\inf\{\|\alpha\|_{H,g}\|R_g^-\|_{L^2}\mid g\in Met(M)\}
\end{equation}
of the product of the harmonic norm and $\|R^-\|_{L^2}$ over the space $Met(M)$ of all Riemannian metrics on $M$.
\end{corollary}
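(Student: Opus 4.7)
The plan is to prove the two inequalities separately. The bound $\|\alpha\|_{Th}\leq (4\pi)^{-1}\inf_{g}\|\alpha\|_{H,g}\|R_g^-\|_{L^2}$ is immediate from Theorem \ref{thurst.thm}: the estimate \eqref{thurst.bd} holds for every Riemannian metric $g$ on $M$, and taking the infimum yields the claim. The content of the corollary is therefore the reverse inequality, for which I would construct, for each $\varepsilon > 0$, a metric $g$ with $\|\alpha\|_{H,g}\|R_g^-\|_{L^2} < 4\pi(\|\alpha\|_{Th}+\varepsilon)$. Assume $\alpha\neq 0$, as the zero case is trivial.

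I would begin by fixing an embedded oriented surface $\Sigma = \sqcup_i \Sigma_i \subset M$ representing $\alpha$ with $\chi_-(\Sigma) = \|\alpha\|_{Th}$. Since $M$ has no nonseparating spheres, every $2$-sphere in $M$ separates, hence is null-homologous in $H_2(M;\mathbb{Z})$; sphere components of $\Sigma$ can therefore be discarded without altering $[\Sigma]$ or $\chi_-(\Sigma)$, and I may assume $\chi(\Sigma_i)\leq 0$ for all $i$. Equip each non-torus component with a hyperbolic metric $h_i$ of area $A_i := |\chi(\Sigma_i)|$, so that $K_{h_i}\equiv -2\pi$ by Gauss--Bonnet, and each torus component with a flat metric of some fixed area $A_j$. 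For parameters $L, L_T > 0$, build a smooth metric $g = g_{L,L_T}$ on $M$ that is the product $dt^2 + h_i$ on a tubular neighborhood $\Sigma_i \times [-L, L]$ of each non-torus component, $dt^2 + h_j$ on a tubular neighborhood $T_j \times [-L_T, L_T]$ of each torus component, and an arbitrary but fixed metric $g_N$ on the complement $N$, matched smoothly via product-form collars.

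Two direct computations finish the argument. First, since each non-torus cylinder has $R = 2K_{h_i} = -4\pi$, torus cylinders are flat, and the complement contributes $O(1)$, one obtains
$$\int_M (R_g^-)^2\, dV \;=\; \sum_{\chi(\Sigma_i)<0} 16\pi^2 \cdot A_i \cdot 2L \;+\; O(1) \;=\; 32\pi^2\, \chi_-(\Sigma)\, L \;+\; O(1).$$
Second, I would apply the variational inequality $\|\alpha\|_{H,g}^2 \leq \int_M |du|^2$ to the test map $u\colon M\to S^1=\mathbb{R}/\mathbb{Z}$ defined to equal $\theta_0 + (t+L)/(2L)$ on each non-torus cylinder, $\theta_0 + (t+L_T)/(2L_T)$ on each torus cylinder, and $\theta_0$ on $N$. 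All cylinder endpoints map to $\theta_0\in S^1$, so the pieces fit together continuously, and standard smoothing in the boundary collars at a cost of $O(L^{-2})$ yields
$$\|\alpha\|_{H,g}^2 \;\leq\; \frac{\chi_-(\Sigma)}{2L} \;+\; \frac{1}{2L_T}\sum_{\text{tori }j} A_j \;+\; O(L^{-2}).$$
Multiplying the two bounds and letting $L\to\infty$ while $L/L_T\to 0$ drops the torus term and gives $\|\alpha\|_{H,g}\|R_g^-\|_{L^2} \to 4\pi\chi_-(\Sigma) = 4\pi\|\alpha\|_{Th}$, completing the proof. The only steps requiring any care are the test-map estimate and the device of taking $L_T \gg L$ so that torus components of $\Sigma$ do not contaminate the leading order of the harmonic norm; neither presents a serious obstacle.
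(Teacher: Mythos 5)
Your proposal is correct and follows essentially the same construction as the paper: the upper bound comes from Theorem \ref{thurst.thm}, and the reverse inequality from metrics with long product cylinders around a Thurston-norm-minimizing surface (constant negative curvature on the non-torus components, flat on the tori), with a linear test map bounding the harmonic norm. The only cosmetic difference is how the torus components are rendered negligible --- you stretch their cylinders ($L_T\gg L$) where the paper shrinks their area ($\delta\to 0$) --- and your curvature normalization $K\equiv-2\pi$ in place of $R_\Sigma\equiv-2$ is an equivalent rescaling.
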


\begin{remark} In \cite{Katz}, G. Katz has also considered the application of harmonic $S^1$--valued maps to the study of the Thurston norm of $3$--manifolds. The results of \cite{Katz} emphasize topological features of the maps, rather than their role as a mediator between topology and geometry.
\end{remark}

While the preceding results show that circle--valued harmonic maps can be used to recover some geometric inequalities previously obtained through Dirac operator methods, Theorem \ref{key.id} may also be used in the proofs of some rigidity theorems related to the area of minimal surfaces in $M$. On a closed, oriented $3$--manifold $(M^3,g)$, define the \emph{homological $2$--systole}
\begin{equation}\label{sys.def}
sys_2(M):=\inf\{Area(\Sigma^2)\mid \Sigma^2\subset M\text{ embedded, }[\Sigma]\neq 0\in H_2(M;\mathbb{Z})\}
\end{equation}
to be the least area among nonseparating surfaces in $M$. In Section \ref{rig.sec}, we observe that the identity \eqref{main.id} yields a short proof of the following rigidity theorem, originally proved by Bray--Brendle--Neves in \cite{BBN} via the analysis of stable minimal surfaces.

\begin{theorem}[cf. \cite{BBN}]\label{bbn.cor}
On a closed, oriented $3$--manifold $(M^3,g)$ with positive scalar curvature $R_M>0$ and nontrivial homology $H_2(M;\mathbb{Z})\neq 0$, we have
\begin{equation}
(\min R_M)sys_2(M)\leq 8\pi,
\end{equation}
with equality only if $M$ is covered isometrically by a cylinder $S^2\times \mathbb{R}$ over a round sphere.
\end{theorem}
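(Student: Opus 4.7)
The plan is to apply Theorem \ref{key.id} to the harmonic $S^1$-valued map dual to the class of an area-minimizing systolic sphere. Since $R_M > 0$ and $H_2(M;\mathbb{Z}) \neq 0$, a standard Schoen--Yau argument produces a nonseparating embedded $2$-sphere $S \subset M$ with $\area(S) = sys_2(M)$: minimizing area among surfaces representing a primitive nontrivial class yields a stable minimal surface whose components must be $2$-spheres by the second variation formula together with $R_M > 0$, and at least one such component is nonseparating. Setting $\alpha := [S] \in H_2(M;\mathbb{Z})$, let $u: M \to S^1$ be the harmonic map Poincar\'e dual to $\alpha$.

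Applying \eqref{main.id} and discarding the nonnegative Hessian term gives
\[
  2\pi \int_{S^1} \chi(\Sigma_\theta)\, d\theta \;\geq\; \tfrac{1}{2}(\min R_M) \int_{S^1} \area(\Sigma_\theta)\, d\theta.
\]
On the right-hand side, every regular level set $\Sigma_\theta$ represents $\alpha \neq 0$ and therefore contains at least one nonseparating component of area at least $sys_2(M)$; since $S^1 = \mathbb{R}/\mathbb{Z}$ has unit length, this yields $\int_{S^1} \area(\Sigma_\theta)\, d\theta \geq sys_2(M)$.

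The key remaining step is to establish $\int_{S^1} \chi(\Sigma_\theta)\, d\theta \leq 2$. Each component $C$ of $\Sigma_\theta$ satisfies $\chi(C) \leq 2$, with equality iff $C \cong S^2$, so $\chi(\Sigma_\theta)$ is bounded above by twice the number of sphere components. Using that $\alpha$ is primitive and that each nonseparating sphere component of $\Sigma_\theta$ contributes at least $sys_2(M)$ to $\area(\Sigma_\theta)$, I would argue that any excess sphere components beyond a single representative are paid for by a proportional increase in area, so that the stronger pointwise bound $\chi(\Sigma_\theta) \leq \frac{2}{sys_2(M)}\area(\Sigma_\theta)$ holds almost everywhere. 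Combined with the previous display, this gives $4\pi \geq \tfrac{1}{2}(\min R_M)\, sys_2(M)$, i.e.\ $(\min R_M)\, sys_2(M) \leq 8\pi$.

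For rigidity, equality throughout forces vanishing of the Hessian term, so $du$ is a nowhere-zero parallel $1$-form on $M$; in particular $u$ has no critical points. Passing to the infinite cyclic cover $\tilde M$ determined by $\alpha$ and invoking the de Rham decomposition, $(\tilde M, g)$ splits isometrically as a Riemannian product $\Sigma \times \mathbb{R}$ with $\tilde u$ the projection to $\mathbb{R}$. Saturation of the area and Euler-characteristic bounds forces each fiber $\Sigma$ to be a connected $S^2$ of area $sys_2(M)$, while constancy $R_M \equiv \min R_M$ together with the Gauss equation forces $\Sigma$ to be round. The main obstacle in the plan is making the ``area penalty'' bound on $\chi(\Sigma_\theta)$ rigorous: ruling out (or charging appropriately) separating sphere components, whose areas are not a priori controlled by $sys_2(M)$, is the topological input that does not follow directly from the analytic identity.
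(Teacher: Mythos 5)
The gap you flag at the end is genuine, and it is exactly the point where your plan diverges from the paper: the missing input is that \emph{no} component of a regular fiber of $u$ can be separating, so the separating sphere components you worry about simply do not occur. Indeed, on a regular fiber $\Sigma_{\theta}$ the closed $2$--form $*h$ (with $h=u^*(d\theta)$) restricts to $|du|$ times the area form, so every connected component $S_i$ satisfies $\int_{S_i}*h=\int_{S_i}|du|>0$; a separating component bounds a region of $M$, hence is null--homologous, and this pairing would have to vanish. Thus every component of every regular fiber is nonseparating, so each has area at least $sys_2(M)$, and writing $N(\theta)$ for the number of components you obtain precisely the pointwise bound you wanted, $\chi(\Sigma_{\theta})\leq 2N(\theta)\leq \frac{2}{sys_2(M)}\area(\Sigma_{\theta})$, with no ``charging'' argument needed. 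Feeding this into the consequence of Theorem \ref{key.id} with the Hessian term dropped (Corollary \ref{obv.rig.cor}) and cancelling the factor $\int_{S^1} N(\theta)>0$ gives $(\min R_M)\,sys_2(M)\leq 8\pi$; this is exactly the paper's proof, so once this one observation is supplied your argument closes.

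Two further remarks. Your opening step---producing an area--minimizing nonseparating sphere via Schoen--Yau and setting $\alpha=[S]$---is unnecessary, and would itself require justification that the infimum in \eqref{sys.def} is attained by such a sphere; once all fiber components are known to be nonseparating, any nontrivial class (equivalently, any nonconstant harmonic map to $S^1$, which exists since $H_2(M;\mathbb{Z})\neq 0$) works, which is what the paper does. Your rigidity sketch (vanishing Hessian, splitting of the cyclic cover, roundness of the fiber from constancy of the scalar curvature and saturation of $\chi(\Sigma_{\theta})\leq 2N(\theta)$) is in line with the equality discussion the paper carries out via Corollary \ref{obv.rig.cor} and the proof of Theorem \ref{thurst.thm}.
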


While the arguments in \cite{BBN} apply to systolic quantities somewhat finer than $sys_2(M)$, the identity \eqref{main.id} provides a much shorter path to rigidity. Indeed, one generally appealing feature of \eqref{main.id} is the ease with which it leads to rigidity and splitting statements, based on the existence of nonconstant $S^1$--valued maps of vanishing Hessian. Though we restrict our attention in the present note to the applications discussed above, we expect that Theorem \ref{key.id} and variants thereof will find use in the study of other questions related to the scalar curvature of three--dimensional manifolds.

\section*{acknowledgements}

The author is grateful to Otis Chodosh, Yevgeny Liokumovich, and Antoine Song for several valuable conversations related to this work. For their interest and encouragement, he also thanks Robert Haslhofer, Fernando Cod\'{a} Marques, Alex Nabutovsky, Regina Rotman, and Mark Stern. He also takes the opportunity to thank Hugh Bray for introducing him to the study of scalar curvature during his undergraduate years at Duke.

\section{Derivation of the identity}\label{compute}
In this section we describe the computations from which Theorem \ref{key.id} follows. The only ingredients are the Bochner identity for harmonic one--forms, the Gauss equation, the coarea formula, and the Gauss--Bonnet theorem.

\begin{proof}[Proof of Theorem \ref{key.id}] Let $u: M\to S^1=\mathbb{R}/\mathbb{Z}$ be harmonic, so that the gradient one--form $h:=u^*(d\theta)$ is a harmonic form. The standard Bochner identity for $h$ then reads
\begin{equation}
\Delta \frac{1}{2}|h|^2=|Dh|^2+Ric(h,h),
\end{equation}
and setting
$$\varphi_{\delta}:=(|h|^2+\delta)^{1/2}$$
for any $\delta>0$, it easily follows that
\begin{equation}\label{phi.del.comp}
\Delta \varphi_{\delta}=\frac{1}{\varphi_{\delta}}[\frac{1}{2}\Delta |h|^2-\frac{|h|^2}{\varphi_{\delta}^2}|d|h||^2]\geq \frac{1}{\varphi_{\delta}}[|Dh|^2-|d|h||^2+Ric(h,h)].
\end{equation}

Now, along a regular level set $\Sigma$ of $u$, note that $\nu:=\frac{h}{|h|}$ gives the unit normal to $\Sigma$, so--mimicking Schoen and Yau's trick from the minimal hypersurface setting--we can use the traced Gauss equation
$$Ric(\nu,\nu)=\frac{1}{2}(R_M-R_{\Sigma}+H_{\Sigma}^2-|k_{\Sigma}|^2)$$
to rewrite the Ricci term $Ric(h,h)=|h|^2Ric(\nu,\nu)$ in \eqref{phi.del.comp}. Here, $R_M$ and $R_{\Sigma}$ are the scalar curvatures of $M$ and $\Sigma$, respectively, $k_{\Sigma}$ is the second fundamental form of $\Sigma$, and $H_{\Sigma}=tr_{\Sigma}k_{\Sigma}$ is the mean curvature. In particular, recalling that the second fundamental form $k_{\Sigma}$ of $\Sigma$ is given (up to sign) by the restriction
$$k_{\Sigma}=(|h|^{-1}Dh)|_{\Sigma}$$
of the normalized Hessian $|h|^{-1}Dh$ to $T\Sigma$, we see that
$$|h|^2|k_{\Sigma}|^2=|Dh|^2-2|d|h||^2+Dh(\nu,\nu)^2.$$
Moreover, we see that the mean curvature $H_{\Sigma}=tr_{\Sigma}k_{\Sigma}$ is given by
$$|h|H_{\Sigma}=tr_M(Dh)-Dh(\nu,\nu)=-Dh(\nu,\nu)$$
(where in the last equality we have used the fact that $h$ is harmonic), so that
$$|h|^2(H_{\Sigma}^2-|k_{\Sigma}|^2)=2|d|h||^2-|Dh|^2.$$
Putting these identities together, we find that
\begin{equation}\label{ric.comp}
Ric(h,h)=|h|^2Ric(\nu,\nu)=\frac{1}{2}|h|^2(R_M-R_{\Sigma})+\frac{1}{2}(2|d|h||^2-|Dh|^2).
\end{equation}
Substituting \eqref{ric.comp} for the Ricci term in \eqref{phi.del.comp} and writing $Hess(u)=Dh$, we see now that, along regular level sets of $u$,
\begin{equation}\label{phi.del.2}
\Delta \varphi_{\delta}\geq \frac{1}{2\varphi_{\delta}}[|Hess(u)|^2+|du|^2(R_M-R_{\Sigma})].
\end{equation}

Now, let $A\subset S^1$ be an open set containing the set $\mathcal{C}$ of critical values of $u$, and let $B\subset Reg(u)$ be the complementary closed subset of regular values. Integrating \eqref{phi.del.2} over $u^{-1}(B)$ and using the fact that $\int_M\Delta \varphi_{\delta}=0$, it then follows that
\begin{equation}\label{pre.id.1}
\int_{u^{-1}(B)}\frac{1}{2\varphi_{\delta}}[|Hess(u)|^2+|du|^2(R_M-R_{\Sigma})]\leq -\int_{u^{-1}(A)}\Delta \varphi_{\delta},
\end{equation}
Moreover, since
$$\Delta \varphi_{\delta}\geq \frac{1}{\varphi_{\delta}}(|Dh|^2-|d|h||^2+Ric(h,h))\geq -C_M|h|$$
globally on $M$, we see that
$$-\int_{u^{-1}(A)}\Delta \varphi_{\delta}\leq C_M\int_{u^{-1}(A)}|h|=C_M\int_A Area(\Sigma_{\theta}),$$
where we have used the coarea formula in the last inequality. On the other hand, since $|h|>0$ is bounded away from $0$ on $u^{-1}(B)$, we can pass to the limit $\delta \to 0$ in the left--hand side of \eqref{pre.id.1} to conclude that
$$\int_{u^{-1}(B)}\frac{1}{2|du|}[|Hess(u)|^2+|du|^2(R_M-R_{\Sigma})]\leq C\int_A Area(\Sigma_{\theta}).$$
By the coarea formula and the Gauss--Bonnet theorem, we see finally that
\begin{eqnarray*}\int_{u^{-1}(B)}\frac{|du|}{2}\left(\frac{|Hess(u)|^2}{|du|^2}+(R_M-R_{\Sigma})\right)&=&\frac{1}{2}\int_B\int_{\Sigma_{\theta}}(|du|^{-2}|Hess(u)|^2+R_M)\\
&&-\int_B2\pi \chi(\Sigma_{\theta}),
\end{eqnarray*}
so that the preceding estimate becomes
\begin{equation}\label{almost.id}
\frac{1}{2}\int_B\int_{\Sigma_{\theta}}(|du|^{-2}|Hess(u)|^2+R_M)\leq 2\pi \int_B\chi(\Sigma_{\theta})+C\int_A Area(\Sigma_{\theta}).
\end{equation}

Finally, by Sard's theorem, we can take the measure of $A$ arbitrarily small, and since $\theta \mapsto Area(\Sigma_{\theta})$ is integrable over $S^1$ (by the coarea formula), taking $|A|\to 0$ in \eqref{almost.id} yields the desired identity.
\end{proof}

\begin{remark} More generally, if $\psi \in C^{\infty}(M)$ is any smooth function, the same computation gives the identity
\begin{equation}
\frac{1}{2}\int_{\theta\in S^1}\int_{\Sigma_{\theta}}(|du|^{-2}|Hess(u)|^2+(R_M-R_{\Sigma}))\psi^2\leq -2\int_M\psi\langle d\psi, d|du|\rangle
\end{equation}
in arbitrary dimension. Na\"{i}vely, one might hope to wield this estimate in a manner similar to the stability inequality for minimal hypersurfaces to extract more information about the geometry of the fibers $\Sigma_{\theta}=u^{-1}\{\theta\}$ in dimension $n\geq 4$.
\end{remark}

\section{Scalar curvature and the Thurston norm}\label{thurst.sec}

Let $(M^3,g)$ be a closed, oriented $3$--manifold that contains no nonseparating spheres. Given a nontrivial homology class $\alpha\in H_2(M;\mathbb{Z})$, consider the harmonic map $u:M\to S^1=\mathbb{R}/\mathbb{Z}$ whose fibers $\Sigma_{\theta}=u^{-1}\{\theta\}$ lie in $\alpha$. While any given regular fiber $\Sigma_{\theta}$ may have multiple connected components $\Sigma_{\theta}=S_1\cup \cdots \cup S_k$, it is obvious that each component $S_i$ must have nontrivial pairing
$$\langle S_i,*h\rangle=\int_{S_i}|h|>0$$
with the closed $2$--form $*h$ dual to the gradient one--form $h=u^*(d\theta)$. In particular, since $M$ contains no nonseparating spheres, it follows that every component of $\Sigma_{\theta}$ must have nonpositive Euler characteristic, so that by definition \eqref{th.def} of the Thurston norm, we have
\begin{equation}\label{thurst.bd.1}
\|\alpha\|_{Th}\leq -\chi(\Sigma_{\theta})
\end{equation}
for every regular value $\theta\in S^1$.

\begin{proof}[Proof of Theorem \ref{thurst.thm}]

Combining \eqref{thurst.bd.1} with Theorem \ref{key.id}, and recalling that we're taking as our target the circle $S^1=\mathbb{R}/\mathbb{Z}$ of unit length, we then see that
\begin{eqnarray*}
2\pi \|\alpha\|_{Th}&\leq & -2\pi \int_{\theta\in S^1}\chi(\Sigma_{\theta})\\
&\leq & -\frac{1}{2}\int_{\theta\in S^1}\int_{\Sigma_{\theta}}(|du|^{-2}|Hess(u)|^2+R_M)\\
&\leq &-\frac{1}{2}\int_MR_M|du|,
\end{eqnarray*}
with equality only if $Hess(u)\equiv 0$. Applying Cauchy--Schwarz, it follows in particular that
\begin{equation}
\|\alpha\|_{Th}\leq \frac{1}{4\pi}\|du\|_{L^2}\|R_M^-\|_{L^2}=\frac{1}{4\pi}\|\alpha\|_H\|R_M^-\|_{L^2},
\end{equation}
with equality only if $Hess(u)\equiv 0$ and $R_M\equiv -c|du|$ is constant. If equality holds, then since $Hess(u)\equiv 0$, fixing any connected component $S$ of a level set $\Sigma_{\theta}$, it's easy to see that the gradient flow
$$\Phi:S\times \mathbb{R}\to M,\text{ }\frac{\partial\Phi}{\partial t}=\frac{grad(u)}{|grad(u)|}\circ \Phi$$
gives a local isometry, while the constancy of $R_M$ implies that $S$ has constant scalar curvature $R_S\equiv R_M$, completing the proof of Theorem \ref{thurst.thm}.
\end{proof}

To prove Corollary \ref{thurst.char}, we now follow more or less the construction from Lemma 4 of \cite{KM} to exhibit a family of metrics $g_{r,\delta}$ for which $\frac{1}{4\pi}\|\alpha\|_{H,g}\|R_g^-\|_{L^2}$ approaches the Thurston norm $\|\alpha\|_{Th}$ as $r\to\infty$ and $\delta\to 0$.

\begin{proof}[Proof of Corollary \ref{thurst.char}] Fix a nontrivial homology class $\alpha\in H_2(M;\mathbb{Z})$, and let
$$\Sigma=\Sigma_1\cup\cdots\cup \Sigma_k$$
be an embedded representative of $\alpha$ realizing the Thurston norm
$$\|\alpha\|_{Th}=\chi_-(\Sigma),$$
such that none of the $\Sigma_i$ is a sphere. Let $\Sigma_1,\ldots,\Sigma_p$ be the torus components, so that $\Sigma_{p+1},\ldots,\Sigma_k$ have negative Euler characteristic.

Given $\delta>0$, fix an initial metric $g_{1,\delta}$ which coincides on a neighborhood of each $\Sigma_i$ with the cylinder $\Sigma_i\times [0,1]$, where $\Sigma_i$ is endowed with a flat metric of area $\delta>0$ if $1\leq i\leq p$, and for $p<i\leq k$, $\Sigma_i$ is given a metric of constant scalar curvature
$$R_{\Sigma_i}\equiv-2$$
and area
$$Area(\Sigma_i)=-2\pi \chi(\Sigma_i).$$

Now, for $r>>1$, let $g_{r,\delta}$ be a metric which contains about each $\Sigma_i$ a product region $T_{r,i}\cong \Sigma_i\times [0,r]$ and coincides with $g_{1,\delta}$ on the complement $E:=M\setminus\bigcup_{i=1}^k T_{r,i}$. We then see that
\begin{eqnarray*}
\int_M (R_{g_{r,\delta}}^-)^2dvol_{g_{r,\delta}}&=&\int_E (R_{g_{1,\delta}}^-)^2dvol_{g_{1,\delta}}-r\Sigma_{i=p+1}^k4 (2\pi \chi(\Sigma_i))\\
&=&C(\delta)+8\pi r \|\alpha\|_{Th}.
\end{eqnarray*}
At the same time, we can define a map $v^r:M\to \mathbb{R}/\mathbb{Z}$ in the homotopy class dual to $\alpha$ by setting
$$v^r(x,t):=t/r\text{ for }(x,t)\in T_{r,i}\cong \Sigma_i\times [0,r]$$
and $v^r\equiv 1\equiv 0 \mod \mathbb{Z}$ on $E$, and direct computation gives
\begin{eqnarray*}
\int_M |dv^r|_{g_r}^2dvol_{g_{r,\delta}}&=&\Sigma_{i=1}^k\frac{1}{r^2}vol(T_{r,i})\\
&=&\Sigma_{i=1}^p\frac{\delta}{r}-\Sigma_{i=p+1}^k\frac{2\pi \chi(\Sigma_i)}{r}\\
&=&\frac{1}{r}(p\delta+2\pi \|\alpha\|_{Th}).
\end{eqnarray*}
By definition, the harmonic norm $\|\alpha\|_{H,g_{r,\delta}}$ is bounded above by the $L^2$ norm of $dv^r$, so taking the product of the preceding estimates, we see now that
\begin{eqnarray*}
\|\alpha\|_{H,g_{r,\delta}}^2\|R_{g_{r,\delta}}^-\|_{L^2}^2&\leq & (8\pi\|\alpha\|_{Th}+\frac{C(\delta)}{r})(p\delta+2\pi \|\alpha\|_{Th})\\
&=&16 \pi^2\|\alpha\|_{Th}^2+p\delta (8\pi \|\alpha\|_{Th})+\frac{C(\delta)(p\delta+2\pi\|\alpha\|_{Th})}{r}.
\end{eqnarray*}

For any fixed $\delta>0$, taking $r\to\infty$ in the preceding estimate gives
\begin{equation}
\inf\{\|\alpha\|_{H,g}\|R_g^-\|_{L^2}\mid g\in Met(M)\}\leq (16\pi^2\|\alpha\|_{Th}^2+8\pi p\|\alpha\|_{Th}\delta)^{1/2},
\end{equation}
and taking $\delta \to 0$ gives the desired result
$$\inf\{\|\alpha\|_{H,g}\|R_g^-\|_{L^2}\mid g\in Met(M)\}=4\pi \|\alpha\|_{Th}.$$
\end{proof}

\section{Other rigidity results}\label{rig.sec}

The following is another immediate corollary of Theorem \ref{key.id}.

\begin{corollary}\label{obv.rig.cor} Let $u:M^3\to S^1$ be a nonconstant harmonic map from a closed, oriented $3$--manifold $M$ of positive scalar curvature $R_M>0$. Then
\begin{equation}\label{obv.cor}
2\pi \int_{\theta\in S^1}\chi(\Sigma_{\theta})\geq \frac{1}{2}(\min R_M)\int_{\theta\in S^1}Area(\Sigma_{\theta}),
\end{equation}
with equality only if $M$ is covered isometrically by a cylinder $S^2\times \mathbb{R}$ over a round sphere.
\end{corollary}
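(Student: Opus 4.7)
The plan is to read the inequality essentially off Theorem \ref{key.id} and then chase the equality case through the Bochner identity and the de Rham decomposition.

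For the inequality, I would start from the integrated Bochner identity
\[
2\pi\int_{\theta\in S^1}\chi(\Sigma_{\theta})\geq \frac{1}{2}\int_{\theta\in S^1}\int_{\Sigma_{\theta}}\bigl(|du|^{-2}|Hess(u)|^2+R_M\bigr)
\]
from Theorem \ref{key.id}, drop the manifestly nonnegative Hessian term, and use the pointwise bound $R_M\geq \min R_M>0$ on each $\Sigma_\theta$ to obtain
\[
2\pi\int_{S^1}\chi(\Sigma_{\theta})\geq \tfrac{1}{2}(\min R_M)\int_{S^1}\mathrm{Area}(\Sigma_{\theta}),
\]
which is exactly \eqref{obv.cor}.

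For rigidity, equality in \eqref{obv.cor} forces $|Hess(u)|\equiv 0$ on each regular level set (hence on $M$ by continuity and the density of regular values) and forces $R_M\equiv\min R_M$ on a.e.\ level set, and therefore everywhere. In particular $h:=u^*(d\theta)$ is a nontrivial parallel $1$-form with $|h|$ constant, so $u$ has no critical points and defines a genuine fibration $M\to S^1$. The vector field $\nu=h^\sharp/|h|$ is then parallel, and the de Rham decomposition of the universal cover produces an isometric splitting $\widetilde{M}\cong \widetilde{\Sigma}\times \mathbb{R}$, where $\widetilde{\Sigma}$ is the universal cover of a fixed regular fiber $\Sigma$.

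It remains to identify $\widetilde{\Sigma}$ as a round sphere. Because $Dh\equiv 0$, the second fundamental form $k_\Sigma=(|h|^{-1}Dh)|_{\Sigma}$ and mean curvature $H_\Sigma$ of each fiber vanish, and the Bochner identity $\frac{1}{2}\Delta|h|^2=|Dh|^2+Ric(h,h)$ combined with $|h|$ being constant gives $Ric(\nu,\nu)\equiv 0$. The traced Gauss equation then yields $R_\Sigma\equiv R_M\equiv \min R_M>0$, so each fiber $\Sigma$ is a closed surface of constant positive Gauss curvature $K=R_M/2$, hence $\Sigma\cong S^2$ with the round metric and $\widetilde{\Sigma}=\Sigma$. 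Thus $\widetilde{M}$ is isometric to the round cylinder $S^2\times\mathbb{R}$, as required.

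The only nonroutine step is extracting the global isometric splitting of the cover from the parallel one-form, but this is a standard consequence of the de Rham theorem once $Dh\equiv 0$ is established, so I expect no real obstacle; the bulk of the work has already been done in Theorem \ref{key.id}.
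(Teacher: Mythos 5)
Your proposal is correct and follows essentially the same route as the paper: the inequality is read off Theorem \ref{key.id} by discarding the Hessian term and bounding $R_M\geq \min R_M$, and equality forces $Hess(u)\equiv 0$ together with $R_M\equiv \min R_M$, whence the fibers are totally geodesic with $R_\Sigma=R_M>0$ and the manifold is covered by a round cylinder. The only (cosmetic) difference is that you invoke the de Rham splitting of the universal cover where the paper runs the unit gradient flow $\Phi:S\times\mathbb{R}\to M$ off a fiber component to produce the isometric covering directly; both rest on the parallelism of $u^*(d\theta)$.
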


As in the proof of Theorem \ref{thurst.thm}, the local splitting of $M$ in the case of equality follows from the existence of a nonconstant map $u:M\to S^1$ of vanishing Hessian, while the constancy of the scalar curvature is an easy consequence of equality in the intermediate estimates. 

\begin{proof}[Proof of Theorem \ref{bbn.cor}]

As discussed in the preceding section, we know (by integrating against $*h$) that every connected component of a regular fiber $\Sigma_{\theta}$ is nonseparating, so by definition \eqref{sys.def} of the homological $2$-systole $sys_2(M)$, denoting by $N(\theta)$ the number of components of $\Sigma_{\theta}$, we have
$$Area(\Sigma_{\theta})\geq N(\theta)\cdot sys_2(M).$$
On the other hand, it's also clear that
$$\chi(\Sigma_{\theta})\leq 2 N(\theta),$$
and applying both inequalities in \eqref{obv.cor}, we see that
\begin{equation}
4\pi \int_{\theta\in S^1}N(\theta)\geq \frac{1}{2}(\min R_M)sys_2(M)\int_{\theta\in S^1}N(\theta).
\end{equation}
In particular, it follows that
$$(\min R_M)sys_2(M)\leq 8\pi,$$
with equality only if $M$ is covered by a cylinder $S^2\times \mathbb{R}$ over a round sphere. \end{proof}

\end{document}